\documentclass[10pt]{amsart}
\newtheorem{theorem}{Theorem}[section]
\newtheorem{lemma}[theorem]{Lemma}
\newtheorem{proposition}[theorem]{Proposition}
\newtheorem{corollary}[theorem]{Corollary}
 
\theoremstyle{definition}

\newtheorem*{example}{Example}

\numberwithin{equation}{section}

\begin{document}

\title{Fibonacci Expansions}
\author{Claudio Baiocchi}
\address{Accademia Nazionale dei Lincei, Palazzo Corsini, Via della Lungara 10, 00165 Roma, Italy}
\author{Vilmos Komornik}
\address{
D\'epartement de math\'ematique,
Universit\'e de Strasbourg,
7 rue Re\-n\'e Des\-car\-tes,
67084 Strasbourg Cedex, France}
\email{vilmos.komornik@math.unistra.fr}
\author{Paola Loreti}
\address{Sapienza Universit\`a di Roma,
Dipartimento di Scienze di Base
e App\-licate per l'Ingegne\-ria,
via A. Scarpa n. 16,
00161 Roma, Italy}
\email{paola.loreti@sbai.uniroma1.it}
\thanks{Claudio Baiocchi passed away on December 14, 2020.
His co-authors are grateful to his essential contributions to our collaboration.}
\thanks{The work of the second author was partially supported by the National Natural Science Foundation
of China (NSFC)  \#11871348.}

\begin{abstract}
Expansions in the Golden ratio base have been studied since a pioneering paper of R\'enyi more than sixty years ago.
We introduce closely related expansions of a new type, based on the Fibonacci sequence, and we show that in some sense they behave better.
\end{abstract}

\subjclass[2010]{Primary: 11A63, Secondary: 11B39}
\keywords{Non-integer base expansions, Kakeya sequences, Fibonacci sequence, Golden ratio}

\maketitle

\section{Introduction }\label{s1}

Expansions of the form
\begin{equation*}
x=\sum_{i=1}^{\infty}\frac{c_i}{q^i},\quad (c_i)\in\{0,1\}^{\mathbb N}
\end{equation*}
in real bases $q\in (1,2)$ have been first studied by R\'enyi \cite{Ren1957}.
Subsequently many works have been devoted to their connections to probability and ergodic  theory, combinatorics, symbolic dynamics, measure theory, topology and number theory; see, e.g., 
\cite{Gelfond1959,
C-Par1960,
KomLorPed2000,
Sid2003a,
C-BaiKom2007, 
DajDev2007,
KomLaiPed2011,
AkiKom2013,
Fen2016} 
and their references. 
Following the discovery of surprising uniqueness phenomena by Erd\H os et al. \cite{ErdHorJoo1991}, a rich  theory of unique expansions has been uncovered \cite{ErdJooKom1990,
C-DarKat1995,
ErdKom1998,
C-KomLor1998,
C-KomLor2007,
C-DevKom2009} 
.
There are still many open problems, for example concerning the number of possible  expansions of specific numbers in particular bases.

In the special case where $q=\varphi:=\frac{1+\sqrt{5}}{2}\approx 1.618$ is the Golden ratio, R\'enyi proved that the average distribution of the digits $0$ and $1$ is not the same, and he computed their frequencies.
In this paper we introduce the \emph{Fibonacci expansions} 
\begin{equation*}
x=\sum_{i=1}^{\infty}\frac{c_i}{F_i},\quad (c_i)\in\{0,1\}^{\mathbb N},
\end{equation*}
where the powers $\varphi^i$ are replaced by the Fibonacci numbers:
\begin{equation}\label{11}
F_1:=1,\quad F_2:=1,\quad\text{and}\quad F_{i+2}:=F_{i+1}+F_i,\quad i=1,2,\ldots .
\end{equation}
They are closely related to the expansions in base $\varphi$, because
\begin{equation}\label{12}
F_i=
\frac{1}{\sqrt{5}}\left(\varphi^i+\frac{(-1)^{i+1}}{\varphi^i}\right)
\end{equation}
for all $i$ by Binet's formula, whence $F_i$ is the nearest integer to 
${\varphi^i}/{\sqrt{5}}$.

The purpose of this work is to compare these two  expansions, and to study more general  \emph{Kakeya expansions}  of the form
\begin{equation}\label{13}
x=\sum_{i=1}^{\infty}{c_i}{p_i},\quad (c_i)\in\{0,1\}^{\mathbb N},
\end{equation}
where $(p_i)$ is a given \emph{Kakeya sequence}, i.e., a sequence of positive numbers satisfying the conditions $p_i\to 0$, and
\begin{equation}\label{14}
p_n\le \sum_{i=n+1}^{\infty}p_i\quad\text{for all}\quad n.
\end{equation}
We recall the following classical theorem:

\begin{theorem}[Kakeya \cite{Kak1914,Kak1915}]\label{t11}
If $(p_i)$ is a Kakeya sequence, then
a real number $x$ has an expansion of the form \eqref{13} if and only if $x\in\left[0,\sum_{i=1}^{\infty}p_i\right]$.
\end{theorem}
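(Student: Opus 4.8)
The plan is to dispose of the easy direction and then build the expansion greedily. Write $S:=\sum_{i=1}^\infty p_i\in(0,+\infty]$. Necessity is immediate: if $x=\sum_{i=1}^\infty c_ip_i$ with $c_i\in\{0,1\}$ and all $p_i>0$, then $0\le x\le S$, so $x\in\left[0,S\right]$. For sufficiency I fix $x\in\left[0,S\right]$ and define digits $(c_i)$ and remainders $(x_n)$ by the greedy rule: set $x_0:=x$, and for $n\ge1$ put $c_n:=1$, $x_n:=x_{n-1}-p_n$ if $x_{n-1}\ge p_n$, and $c_n:=0$, $x_n:=x_{n-1}$ otherwise. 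By construction $(x_n)$ is nonincreasing and $x_n=x-\sum_{i=1}^n c_ip_i$, so it suffices to prove that $x_n\to0$.

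The heart of the argument is the invariant
\[
0\le x_n\le\sum_{i=n+1}^\infty p_i\qquad\text{for all }n\ge0,
\]
which I would establish by induction on $n$. The base case is the hypothesis $x_0=x\le S$. For the inductive step I split into the two branches of the greedy rule. If $c_n=1$ then $x_n=x_{n-1}-p_n\ge0$ and, by the inductive hypothesis, $x_n\le\sum_{i=n}^\infty p_i-p_n=\sum_{i=n+1}^\infty p_i$. If instead $c_n=0$ then $x_n=x_{n-1}<p_n$, and here the Kakeya condition \eqref{14} yields exactly $x_n<p_n\le\sum_{i=n+1}^\infty p_i$. This is the one and only place where hypothesis \eqref{14} enters, and it is precisely what guarantees that refusing to use $p_n$ still leaves enough mass in the tail to reach $x$.

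It remains to deduce $x_n\to0$ from the invariant. When $S<\infty$ this is automatic, since the quantities $\sum_{i=n+1}^\infty p_i$ are the tails of a convergent series and hence tend to $0$. The case $S=+\infty$ is where the separate hypothesis $p_i\to0$ becomes essential. Since $(x_n)$ is nonincreasing and bounded below, it converges to some $L\ge0$; suppose $L>0$. As $p_n\to0$, I choose $N$ with $p_n<L$ for all $n>N$. Then $x_{n-1}\ge L>p_n$ forces $c_n=1$ for every $n>N$, whence $x_m=x_N-\sum_{i=N+1}^m p_i$ for $m>N$. But the left-hand side stays $\ge L>0$, so the partial sums $\sum_{i=N+1}^m p_i$ are bounded, contradicting $S=+\infty$. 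Hence $L=0$ in all cases, $x_n\to0$, and $x=\sum_{i=1}^\infty c_ip_i$.

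I expect the main obstacle to be formulating and verifying the invariant in the nonobvious branch $c_n=0$: the key realization is that the Kakeya inequality \eqref{14} says exactly that, after declining to use $p_n$, the remaining terms can still compensate. Everything else — monotonicity of $(x_n)$, the trivial necessity, and the convergence of the tails — becomes routine once the invariant is in hand.
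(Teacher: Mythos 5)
Your proposal is correct, but it takes a genuinely different route from the paper. You run the classical greedy algorithm and establish the tail invariant $0\le x_n\le\sum_{i=n+1}^{\infty}p_i$ by induction, invoking the Kakeya inequality \eqref{14} exactly once, in the branch $c_n=0$; convergence of $x_n$ to $0$ then comes either from the vanishing tails (if $S<\infty$) or from a separate argument using $p_i\to 0$ (if $S=\infty$). The paper instead argues by a symmetric ``two budgets'' partition: it sets $s_1:=x$ and $s_2:=S-x$, and assigns each term $p_n$ to an index $j\in\{1,2\}$ for which the running sum assigned to $j$, increased by $p_n$, still does not exceed $s_j$; such a $j$ exists, since otherwise $2p_n+\sum_{i<n}p_i>s_1+s_2=S$ would contradict \eqref{14}, and because the two final sums are each at most $s_j$ yet together equal $S=s_1+s_2$, both inequalities must be equalities, producing expansions of $x$ and of $S-x$ simultaneously with no limiting process beyond the convergence of the series. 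Each approach buys something: the paper's argument is shorter and elegantly symmetric, but as written it presupposes $S<\infty$ (it needs $s_2=S-x$ to be a real number), and in that regime $p_i\to 0$ is automatic, so that hypothesis is never visibly used; your greedy argument isolates precisely where each hypothesis enters, covers the divergent case $S=\infty$ (where every $x\ge 0$ receives an expansion, and $p_i\to 0$ becomes essential), and in addition produces a canonical expansion, namely the lexicographically largest one, which the paper's nonconstructive choice of $j$ does not single out.
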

For example, $(q^{-i})$ is a Kakeya sequence for every $q\in(1,2]$, so that 
every $x\in [0,\frac{1}{q-1}]$ has an expansion in  base $q$. 
A similar result holds for Fibonacci expansions.
Setting 
\begin{equation}\label{15}
S:=\sum_{i=1}^{\infty}\frac{1}{F_i}\approx 3.360,
\end{equation}
we have the following result:

\begin{theorem}\label{t12}
A real number $x$ has an expansion in the Fibonacci base if and only if $x\in [0,S]$.
\end{theorem}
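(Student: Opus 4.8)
The plan is to derive Theorem~\ref{t12} directly from Kakeya's theorem (Theorem~\ref{t11}), by verifying that the sequence $p_i:=1/F_i$ is a Kakeya sequence. A Fibonacci expansion is precisely an expansion of the form \eqref{13} with this choice of $(p_i)$, and $\sum_{i=1}^\infty p_i=S$. Hence, once $(1/F_i)$ is known to be a Kakeya sequence, Theorem~\ref{t11} immediately yields that $x$ admits a Fibonacci expansion if and only if $x\in[0,S]$, which is the assertion.

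Two properties must be checked. First, $p_i=1/F_i\to 0$, which is immediate since $F_i\to\infty$ (for instance $F_i\sim\varphi^i/\sqrt5$ by \eqref{12}), so that the series $S$ converges. The real content is the Kakeya inequality \eqref{14}, i.e.
\[
\frac{1}{F_n}\le\sum_{i=n+1}^{\infty}\frac{1}{F_i}\qquad\text{for all }n.
\]

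To prove this I would isolate the leading tail terms and use a Cassini-type identity. A short induction (or \eqref{12}) gives $F_nF_{n+3}-F_{n+1}F_{n+2}=(-1)^{n+1}$, and placing the first two tail terms over a common denominator yields
\[
\frac{1}{F_{n+1}}+\frac{1}{F_{n+2}}-\frac{1}{F_n}=\frac{F_nF_{n+3}-F_{n+1}F_{n+2}}{F_nF_{n+1}F_{n+2}}=\frac{(-1)^{n+1}}{F_nF_{n+1}F_{n+2}}.
\]
When $n$ is odd the right-hand side is nonnegative, so the first two tail terms already dominate $1/F_n$ and \eqref{14} holds a fortiori. When $n$ is even the deficit is exactly $1/(F_nF_{n+1}F_{n+2})$, which is closed by the single further term $1/F_{n+3}$, since $F_nF_{n+1}F_{n+2}\ge F_{n+3}$ (for $n\ge2$ one has $F_nF_{n+1}\ge2$ and $F_{n+2}\ge F_{n+1}$, whence $F_nF_{n+1}F_{n+2}\ge 2F_{n+2}\ge F_{n+2}+F_{n+1}=F_{n+3}$). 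In either case
\[
\frac{1}{F_n}\le\frac{1}{F_{n+1}}+\frac{1}{F_{n+2}}+\frac{1}{F_{n+3}}\le\sum_{i=n+1}^{\infty}\frac{1}{F_i},
\]
which is \eqref{14}.

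The only delicate point is this parity split: the inequality is comfortable for odd $n$ but is genuinely tight for even $n$ (failing for the first two terms by the reciprocal of a triple product), so the argument must not stop after two terms. Everything else is routine, and invoking Theorem~\ref{t11} completes the proof.
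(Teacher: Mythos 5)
Your proof is correct, but it reaches the Kakeya inequality \eqref{14} by a genuinely different route than the paper. The paper (Lemma \ref{l31}) proves $F_{n+1}\le 2F_n$ (with equality only at $n=2$), deduces $F_{n+j}\le 2^jF_n$ with at least one strict inequality, and then dominates the \emph{entire} tail by a geometric series: $\sum_{j\ge1}1/F_{n+j}>\sum_{j\ge1}2^{-j}/F_n=1/F_n$. You instead compare $1/F_n$ with only the first two or three tail terms, using the d'Ocagne-type identity $F_nF_{n+3}-F_{n+1}F_{n+2}=(-1)^{n+1}$ (which, as you suggest, follows in one line from the Cassini identity \eqref{51} via $F_nF_{n+3}-F_{n+1}F_{n+2}=F_nF_{n+2}-F_{n+1}^2$) and a parity split; your arithmetic checks out, including the even case where $F_nF_{n+1}F_{n+2}\ge F_{n+3}$ for $n\ge 2$ closes the deficit of exactly $1/(F_nF_{n+1}F_{n+2})$. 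Your finite-truncation argument is sharper in that it quantifies exactly how tight \eqref{14} is and where the parity of $n$ matters --- a phenomenon the paper only confronts later, in Lemma \ref{l51}, where the stronger skip-one-term inequality $1/F_k<\sum_{i\ge k+2}1/F_i$ for odd $k$ is needed. The paper's doubling argument buys something else: the bound $F_{n+1}\le 2F_n$ and the \emph{strict} form of \eqref{14} are exactly conditions \eqref{19} and \eqref{17} of Theorem \ref{t14}, so Lemma \ref{l31} is reused wholesale in the proof of Theorem \ref{t16}, whereas your computation, while self-contained and slightly more elementary for Theorem \ref{t12} alone, would need the strictness remark (which your odd case and the leftover tail terms do supply) to feed into that later argument.
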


Next we will investigate the number of expansions. 
It is clear that the expansions of $0$ and $\frac{1}{q-1}$ are unique in every base $q\in(1,2]$: $c_i\equiv 0$ and $c_i\equiv 1$, respectively. 
Otherwise, $x$ may have several, even infinitely many expansions:

\begin{theorem}[Erd\H{o}s et al. \cite{ErdJooKom1990}]\label{t13}
If $q\in(1,\varphi)$, then every  $x\in(0,\frac{1}{q-1})$ has a continuum of expansions in base $q$.
\end{theorem}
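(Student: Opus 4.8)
The plan is to realize the set of base-$q$ expansions of a fixed interior point as the set of infinite paths through a binary tree in which infinitely many nodes genuinely branch; such a tree has continuum-many paths. To set this up I would track, for a tentative expansion $(c_i)$ of $x$, the rescaled remainders $r_0:=x$ and $r_n:=qr_{n-1}-c_n$, and observe that $(c_i)$ is an expansion of $x$ exactly when every $r_n$ lies in $[0,\ell]$, where $\ell:=\frac{1}{q-1}$; equivalently, each step applies one of the maps $T_0(r)=qr$ or $T_1(r)=qr-1$ while staying in $[0,\ell]$. A short computation shows that $T_0$ is admissible iff $r\le\frac{1}{q(q-1)}$ and $T_1$ iff $r\ge\frac1q$, so both digits are available precisely on the switch region $W:=\left[\frac1q,\frac{1}{q(q-1)}\right]$.

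The crucial point, and the only place the hypothesis $q<\varphi$ is used, is the equivalence
\begin{equation*}
q<\varphi\iff q(q-1)<1\iff \tfrac{1}{q(q-1)}>1,
\end{equation*}
so that $1$ lies in the interior of $W$ and the open switch region $W^\circ=\left(\frac1q,\frac{1}{q(q-1)}\right)$ is nonempty. I would package the dynamics in a Return Lemma: for $q<\varphi$, every $r\in(0,\ell)$ can be driven into $W^\circ$ by finitely many admissible digits. To prove it, note that off $W$ the digit is forced --- only $c=0$ is allowed on $[0,\frac1q)$ and only $c=1$ on $(\frac{1}{q(q-1)},\ell]$ --- and the two forced regions are exchanged by the symmetry $\sigma(r)=\ell-r$, which conjugates $T_0$ and $T_1$ (using $\ell(q-1)=1$), so it suffices to treat the left region. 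There, iterating $T_0$ multiplies $r$ by $q$ until it first reaches $[\frac1q,\ell]$; since the last pre-crossing value is $<\frac1q$, the landing value is $<1<\frac{1}{q(q-1)}$, so the orbit cannot jump over $W$ and in fact lands inside it. The strict inequality $1<\frac{1}{q(q-1)}$ is exactly what prevents overshooting, and if the landing point happens to be the endpoint $\frac1q$, one further step $T_0$ reaches $1\in W^\circ$.

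With the Return Lemma in hand I would build the tree. Starting from $x\in(0,\ell)$, drive the remainder into $W^\circ$; at that node both digits are admissible, and since $r^*\in W^\circ$ gives $T_0(r^*)\in(1,\ell)$ and $T_1(r^*)\in(0,\ell-1)$, both children are again interior points of $(0,\ell)$. Apply the Return Lemma to each child and repeat. This produces, for every sequence in $\{0,1\}^{\mathbb N}$ recording the successive free choices, a distinct admissible digit sequence --- distinct because two choice sequences that first differ at stage $k$ yield digit strings differing at the corresponding position --- hence an injection $\{0,1\}^{\mathbb N}\hookrightarrow\{\text{expansions of }x\}$. Since there are at most $2^{\aleph_0}$ expansions, $x$ has exactly a continuum of them.

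The main obstacle is the Return Lemma, specifically verifying that forced iterations in a forced region always land inside $W$ rather than skipping past it into the opposite forced region; this is where $q<\varphi$ is indispensable, and the symmetry $r\mapsto\ell-r$ is what lets me check it on one side only. A secondary technical care is to keep every visited node strictly inside $(0,\ell)$ so as never to stall at the endpoints $0$ and $\ell$, whose expansions are unique; the openness of $W^\circ$ furnished by the strict inequality $q<\varphi$ is what guarantees this.
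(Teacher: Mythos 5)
Your proof is correct, but it takes a genuinely different route from the paper. The paper never argues inside the self-similar dynamics of base $q$ at all: it proves a general removal result (Theorem \ref{t14}) for Kakeya-type sequences --- one extracts an infinite subsequence of ``special elements'' $p_{i_1}>p_{i_2}>\cdots$ with $\sum_j p_{i_j}\le\min\{x,S-x\}$, checks via Lemma \ref{l41} that the remaining sequence is still Kakeya, assigns an arbitrary bit string to the removed elements, and completes the expansion of the residual amount by Kakeya's theorem --- and then obtains Theorem \ref{t13} by specializing Corollary \ref{c42} to $p_i=q^{-i}$ with $1/q>1/\varphi$. You instead run the rescaled remainder dynamics $r_n=qr_{n-1}-c_n$ on $[0,\frac{1}{q-1}]$, identify the switch region $W=\left[\frac1q,\frac{1}{q(q-1)}\right]$ where both digits are admissible, use the equivalence $q<\varphi\iff q(q-1)<1$ to place $1$ in $W^\circ$, and prove a return lemma (with the symmetry $r\mapsto\frac{1}{q-1}-r$ conjugating $T_0$ and $T_1$, a clean touch) to build a binary tree with infinitely many branch nodes; all your verifications check out, including the no-overshoot step (pre-crossing value $<\frac1q$ forces landing $<1<\frac{1}{q(q-1)}$), the canonical extra $T_0$-step at the endpoint $\frac1q$, the fact that both children of a $W^\circ$-node land strictly inside $(0,\frac{1}{q-1})$, and the injectivity of choice sequences into digit sequences. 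The trade-off: your argument is self-contained, constructive, and pinpoints exactly where $q<\varphi$ enters, but it leans essentially on the geometric self-similarity of $(q^{-i})$ --- the remainder map $r\mapsto qr-c$ has no analogue for a general Kakeya sequence --- so it cannot yield Theorem \ref{t14} or the Fibonacci case (Theorem \ref{t16}), which is precisely the generality the paper's combinatorial removal argument buys. Your switch-region mechanism is in the spirit of the original branching arguments of Erd\H{o}s--Jo\'o--Komornik and of random $\beta$-expansions, so it is a legitimate and arguably more transparent proof of this particular theorem, just not of its generalization.
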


By a different proof, we will extend Theorem \ref{t13} to a class of Kakeya expansions:

\begin{theorem}\label{t14}
Let $(p_i)$ be a sequence of positive real numbers, satisfying the following conditions: 
\begin{align}
&p_i\to 0;\label{16}\\
&p_n<\sum_{i=n+1}^\infty p_i\quad\text{for all}\quad n;\label{17}\\
&p_{n-1}<\sum_{i=n+1}^\infty p_i\quad\text{for infinitely many}\quad n;\label{18}\\
&p_n\le 2p_{n+1}\quad\text{for all sufficiently large}\quad n.\label{19}
\end{align}
Then every $0<x<S:=\sum_{i=1}^{\infty}p_i$ has a continuum of expansions of the form \eqref{13}.
\end{theorem}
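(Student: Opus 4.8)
The plan is to reduce everything to one \emph{reaching lemma} and then to build a Cantor set of expansions by recursion. Throughout set $R_n:=\sum_{i=n}^{\infty}p_i\in(0,+\infty]$, so that $R_1=S$ and $R_n=p_n+R_{n+1}$. By \eqref{16} and \eqref{17} every tail $(p_i)_{i\ge n}$ is again a Kakeya sequence, so Theorem~\ref{t11} applied to the tail shows that a remainder $y$ can be completed to an expansion using $p_n,p_{n+1},\dots$ if and only if $y\in[0,R_n]$. Hence, if the digits chosen before position $n$ leave the remainder $y_n:=x-\sum_{i<n}c_ip_i$, then $c_n=0$ is admissible exactly when $y_n\le R_{n+1}$, while $c_n=1$ is admissible exactly when $y_n\ge p_n$; both are admissible precisely on the \emph{branch interval} $[p_n,R_{n+1}]$, whose interior $(p_n,R_{n+1})$ is nonempty by \eqref{17}. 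The decisive observation is that if $y_n\in(p_n,R_{n+1})$ then both successors $y_n$ and $y_n-p_n$ again lie in $(0,R_{n+1})$, the open feasibility interval at position $n+1$.

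The reduction then rests on the following statement, which I would prove first. \emph{Reaching Lemma: for every $n$ and every $y$ with $0<y<R_n$ there exist an index $m>n$, which may be taken arbitrarily large, and digits $c_n,\dots,c_{m-1}$, such that the resulting remainder $y_m=y-\sum_{k=n}^{m-1}c_kp_k$ satisfies $p_m<y_m<R_{m+1}$.} Granting the lemma, the conclusion is immediate: starting from $y_1=x\in(0,S)=(0,R_1)$, reach a strictly interior branch position $m_1$, choose $c_{m_1}\in\{0,1\}$ freely so that in either case the remainder lands in $(0,R_{m_1+1})$, apply the lemma again to each of the two successors, and iterate. This produces a full binary tree of admissible choices and hence an injection of $\{0,1\}^{\mathbb N}$ into the set of expansions of $x$: two distinct sequences of choices first differ at some node, where the corresponding digit is $0$ in one expansion and $1$ in the other. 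Therefore $x$ has a continuum of expansions.

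To prove the Reaching Lemma I would exploit \eqref{18} and \eqref{19} as follows. Let $\mathcal N:=\{n:p_{n-1}<R_{n+1}\}$, which is infinite by \eqref{18}; for $\nu\in\mathcal N$ the branch interval at $\nu$ is \emph{wide}, its right endpoint $R_{\nu+1}$ exceeding $p_{\nu-1}$. The guiding computation is that of the lazy algorithm ($c_k=1$ iff $y_k>R_{k+1}$): when a block of lazy $1$'s ends at an index $m$, one has $y_{m-1}>R_m$ and $y_m=y_{m-1}-p_{m-1}>R_{m+1}+p_m-p_{m-1}$, so if $m\in\mathcal N$ then $y_m>p_m$, i.e.\ $y_m\in(p_m,R_{m+1}]$ already lands in the closure of the branch interval; a one step look-ahead removes the endpoint case. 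The role of \eqref{19} is to \emph{align} such a landing with an index of $\mathcal N$: after navigating close to a chosen $\nu\in\mathcal N$, a single final step reaches the open branch interval $(p_\nu,R_{\nu+1})$ from any remainder in $(p_\nu,R_{\nu+1})\cup(p_\nu+p_{\nu-1},R_{\nu+1}+p_{\nu-1})$, and the residual gap between these two intervals, of length at most $p_{\nu-1}-R_{\nu+2}$, is covered by one additional step because consecutive terms are comparable, $p_{k}\le 2p_{k+1}$.

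The hard part will be exactly this alignment and covering argument: showing that for \emph{every} admissible remainder one can choose $\nu\in\mathcal N$ large and steer the dynamics strictly inside the branch interval at $\nu$, so that \eqref{18} (width) together with \eqref{19} (comparability of consecutive steps) rule out the remainder skipping over every branch interval. I expect this to require a careful bookkeeping of how the lazy trajectory alternates between blocks on which $c_k=1$ (the remainder decreases by $p_k$) and blocks on which $c_k=0$ (the remainder is frozen), proving that the transitions can be made to meet $\mathcal N$ in the open branch interval. The degenerate case $S=+\infty$ should be disposed of first and is immediate: then $R_{n+1}=+\infty$, the branch interval is $[p_n,\infty)$, and by \eqref{16} choosing $c_k=0$ until $p_m<y$ already gives $y_m\in(p_m,R_{m+1})$.
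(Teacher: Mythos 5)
Your proposal is a program rather than a proof: everything is reduced to your ``Reaching Lemma,'' and you explicitly leave its proof open (``the hard part will be exactly this alignment and covering argument''). That lemma is not a technicality --- it is the entire content of the theorem, since it is precisely where the hypotheses \eqref{18} and \eqref{19} must act; without it you have only proved the (standard) fact that branching is possible whenever the remainder lies in the open interval $(p_n,R_{n+1})$. The superstructure is correct as far as it goes: by \eqref{16}--\eqref{17} every tail is a Kakeya sequence, Theorem \ref{t11} applied to tails gives the feasibility intervals $[0,R_n]$, both digits are admissible exactly on $[p_n,R_{n+1}]$, and granting the lemma the full binary tree and the injection of $\{0,1\}^{\mathbb N}$ into the set of expansions are sound. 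But the sketch of the lemma itself has concrete holes. Nothing forces the lazy trajectory's blocks of $1$'s to end at indices of $\mathcal N$, which is only assumed infinite and may be arbitrarily sparse, so your computation $y_m\in(p_m,R_{m+1}]$ applies only at transition indices you do not control; the ``alignment'' via \eqref{19} is asserted, not proved, and your covering estimate is already off: the gap between $(p_\nu,R_{\nu+1})$ and $(p_\nu+p_{\nu-1},R_{\nu+1}+p_{\nu-1})$ has length $p_\nu+p_{\nu-1}-R_{\nu+1}$, which exceeds your claimed bound $p_{\nu-1}-R_{\nu+2}$ whenever $p_{\nu+1}<p_\nu$. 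The endpoint case is also not removed by a ``one step look-ahead'': if $y_m=R_{m+1}$ the continuation is forced to $c_k=1$ for all $k>m$ (each step reproduces the endpoint), and backing up to choose $c_{m-1}=0$ where the lazy algorithm demands $c_{m-1}=1$ leaves a remainder $y_{m-1}>R_m$ that cannot be completed at all, so avoiding the endpoint requires a real argument, not a local fix.

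For comparison, the paper sidesteps the trajectory dynamics altogether. Its Lemma \ref{l41} shows that deleting a single special element $p_k$ with $k$ large preserves all four hypotheses; the only nontrivial point, the strict inequality \eqref{17} for $N<n<k$ after deletion, is proved by backward induction starting from \eqref{18} at $n=k-1$, the doubling hypothesis \eqref{19} giving $p_{n-1}+p_k\le 2p_n+p_k<p_n+\sum_{i=n+1}^\infty p_i$, while an $\varepsilon$--$K$ choice covers $n\le N$. Iterating, one removes an infinite sparse family of special elements with $\sum_j p_{i_j}\le\min\{x,S-x\}$; the digits on the removed positions may then be chosen arbitrarily, and every choice is completed to an expansion of $x$ by a single application of Kakeya's Theorem \ref{t11} to the remaining sequence, which is still Kakeya in the limit. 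This global ``remove, branch freely, complete'' decomposition is exactly the ingredient your Reaching Lemma lacks; indeed, the cleanest repair of your argument would be to prove the removal stability of Lemma \ref{l41} first, after which the dynamical steering becomes unnecessary.
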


The assumption $q\in(1,\varphi)$ of Theorem \ref{t13} is sharp: if $q=\varphi$, then for example $1$ has only countably many expansions by a theorem of Erd\H{o}s et al. \cite{ErdHorJoo1991}.
Applying Theorem \ref{t14} we will prove that the Fibonacci expansions behave better:

\begin{theorem}\label{t16}
Every $x\in (0,S)$ has a continuum of Fibonacci expansions.
\end{theorem}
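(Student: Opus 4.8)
The plan is to deduce Theorem~\ref{t16} directly from Theorem~\ref{t14} by verifying that the Fibonacci sequence $(p_i)=(1/F_i)$ satisfies all four hypotheses \eqref{16}--\eqref{19}. This reduces everything to four concrete checks about reciprocals of Fibonacci numbers, and the value $S=\sum 1/F_i\approx 3.360$ from \eqref{15} will play the role of the $S$ in Theorem~\ref{t14}.

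First I would dispatch the easy conditions. Condition \eqref{16}, that $1/F_i\to 0$, is immediate since $F_i\to\infty$. Condition \eqref{19}, that $p_n\le 2p_{n+1}$ for all large $n$, amounts to $F_{n+1}\le 2F_n$, i.e. $F_{n+1}/F_n\le 2$; since $F_{n+1}/F_n\to\varphi<2$ by \eqref{12}, this holds for all large $n$ (in fact $F_{n+1}=F_n+F_{n-1}\le 2F_n$ holds whenever $n\ge 2$). So the only substantive work is in the two tail-sum inequalities \eqref{17} and \eqref{18}.

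For \eqref{17} I must show $1/F_n<\sum_{i=n+1}^\infty 1/F_i$ for every $n$, and for \eqref{18} the stronger-looking shifted inequality $1/F_{n-1}<\sum_{i=n+1}^\infty 1/F_i$ for infinitely many $n$. The natural approach is to estimate the tail. Using \eqref{12}, $F_i$ is comparable to $\varphi^i/\sqrt5$, so the tail $\sum_{i=n+1}^\infty 1/F_i$ behaves like $\sqrt5\sum_{i>n}\varphi^{-i}=\sqrt5\,\varphi^{-n}/(\varphi-1)=\sqrt5\,\varphi^{-n}\cdot\varphi=\sqrt5\,\varphi^{1-n}$ (using $\varphi-1=1/\varphi$), while $1/F_n\approx\sqrt5\,\varphi^{-n}$ and $1/F_{n-1}\approx\sqrt5\,\varphi^{1-n}$. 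Thus the tail is asymptotically $\varphi$ times $1/F_n$, giving \eqref{17} comfortably, whereas for \eqref{18} the tail and $1/F_{n-1}$ are asymptotically equal, so the inequality is delicate and this is where I expect the main obstacle. To make both rigorous rather than asymptotic, I would prove \eqref{17} by a clean telescoping or induction argument: since $F_{i+1}+F_{i+2}>F_{i+2}$ one can compare $1/F_n$ termwise against a sub-sum of the tail, or simply establish by induction on the partial tails that $\sum_{i=n+1}^N 1/F_i$ exceeds $1/F_n$ once $N$ is large enough. For the sharper \eqref{18}, rather than fight the asymptotic equality for all $n$, I would exploit the oscillating correction term $(-1)^{i+1}/\varphi^i$ in Binet's formula \eqref{12}: for appropriate parities of $n$ the error terms push the inequality strictly in the favorable direction, so $1/F_{n-1}<\sum_{i>n}1/F_i$ holds for an infinite arithmetic-progression subset of $n$, which is all that \eqref{18} requires.

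The key step to get right is therefore \eqref{18}: I would fix a convenient parity of $n$, write each $1/F_i$ exactly via \eqref{12}, sum the resulting geometric-type series in $\varphi^{-i}$ together with its alternating correction, and verify that the net of the two finite corrections is of the correct sign infinitely often. Once all four hypotheses are confirmed, Theorem~\ref{t14} applies verbatim and yields a continuum of expansions of the form \eqref{13} with $p_i=1/F_i$ for every $x\in(0,S)$, which is exactly the conclusion of Theorem~\ref{t16}.
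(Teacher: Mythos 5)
Your proposal is correct and shares the paper's overall architecture---the proof of Theorem \ref{t16} in the paper is exactly the reduction you describe: \eqref{16}, \eqref{17} and \eqref{19} are checked via Lemma \ref{l31} (essentially your $F_{n+1}\le 2F_n$ argument, sharpened to $F_{n+j}\le 2^jF_n$ with strict inequality somewhere, which yields \eqref{17} by comparison with $\sum 2^{-j}$), and all the real content lies in \eqref{18}. Where you genuinely diverge from the paper is in how \eqref{18} is established. The paper's Lemma \ref{l51} proves $\frac{1}{F_k}<\sum_{i=k+2}^{\infty}\frac{1}{F_i}$ for \emph{every} odd $k$ by a purely algebraic device: Cassini's identity \eqref{51} shows the consecutive ratios $\alpha=F_{k+1}/F_k$ and $\beta=F_{k+2}/F_{k+1}$ differ, with $\beta>\alpha$ for odd $k$; bounding $F_{i+1}\le\gamma F_i$ with $\gamma=\max\{\alpha,\beta\}=\beta$ gives the tail a geometric lower bound that evaluates \emph{exactly} to $\frac{1}{F_k}\cdot\frac{1}{\alpha(\gamma-1)}=\frac{1}{F_k}$, with strictness coming from $\alpha\ne\beta$. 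Your route via Binet's formula \eqref{12} is analytically heavier but also works, and the decisive computation you deferred does come out favorably: for even $n$ one has $\frac{1}{F_{n-1}}=\frac{\sqrt5}{\varphi^{n-1}+\varphi^{1-n}}$, which falls short of the leading tail value $\sqrt5\,\varphi^{1-n}$ by a quantity of order $\varphi^{3-3n}$, while the alternating Binet corrections (which decrease geometrically with ratio about $\varphi^{-3}$, so the sign of their sum is that of the first term) depress the tail below $\sqrt5\,\varphi^{1-n}$ by only order $\varphi^{-3n-3}$; since $\varphi^{3-3n}\gg\varphi^{-3n-3}$, the inequality in \eqref{18} holds for all sufficiently large even $n$ (and fails for odd $n$, consistent with the exact tie $\varphi^{1-n}=\sum_{i>n}\varphi^{-i}$ in the pure golden-ratio case that your expansion makes visible). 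The trade-off: the paper's argument is exact, elementary, and valid for all odd $k\ge 1$; yours conceptually explains \emph{why} parity matters but only yields the inequality asymptotically---which is all \eqref{18} demands---and you should be aware that you left this key sign-comparison as a plan rather than executing it, so as written it is a sound blueprint rather than a complete proof.
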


For the reader's convenience we give a short  proof of Theorem \ref{t11} in Section \ref{s2}.
Then Theorems \ref{t12}, \ref{t14} and \ref{t16} are proved in Sections \ref{s3}, \ref{s4} and \ref{s5} , respectively.
Ate the end of Section \ref{s4} we  also deduce Theorem \ref{t13} from Theorem \ref{t14}.

\section{Proof of Theorem \ref{t11}}\label{s2}

First we prove Theorem \ref{t11}.
Given an arbitrary $x\in[0,S]$, we define
a function $f:\mathbb N\to\{1,2\}$ recursively as follows.
Set $s_1:=x$ and $s_2:=S-x$.
If $n\ge 1$ and $f(1),\ldots, f(n-1)$ have already been defined (no assumption if $n=1$), then we choose $j\in \{1,2\}$ such that
\begin{equation*}
p_n+\sum_{i<n, f(i)=j}p_i\le s_j,
\end{equation*}
and we define $f(n):=j$. 
Such a $j$ exists, for otherwise we would have
\begin{equation*}
2p_n+\sum_{i<n}p_i
=\sum_{j=1}^2\left(p_n+\sum_{i<n, f(i)=j}p_i\right)
>s_1+s_2
=S=\sum_{i=1}^{\infty}p_i,
\end{equation*}
contradicting the Kakeya property \eqref{14}. 

The sets $S_1:=\{i\in\mathbb N\ :\ f(i)=1\}$
and $S_2:=\{i\in\mathbb N\ :\ f(i)=2\}$ form a partition $\mathbb N$ such that 
\begin{equation*}
\sum_{i\in S_j}p_i\le s_j,\quad j=1, 2.
\end{equation*}
Both inequalities are in fact equalities because
\begin{equation*}
\sum_{i\in S_1}p_i
+\sum_{i\in S_2}p_i
=\sum_{i=1}^{\infty}p_i
=S=s_1+s_2.
\end{equation*}
In particular,
\begin{equation*}
\sum_{i\in S_1}p_i=x.
\end{equation*}

\section{Proof of Theorem \ref{t12}}\label{s3}

We need a lemma:

\begin{lemma}\label{l31}
We have $F_i\to +\infty$.
Furthermore,
\begin{equation*}
F_{n+1}\le 2F_n
\quad\text{and}\quad 
\frac{1}{F_n}<\sum_{i=n+1}^{\infty}\frac{1}{F_i}
\end{equation*}
for all $n$.
\end{lemma}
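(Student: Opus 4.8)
The plan is to dispatch the three claims in the stated order, because the growth statement and the doubling bound are elementary and feed directly into the third, genuinely substantive, inequality. First I would observe that the recursion \eqref{11} together with $F_i\ge 1$ gives $F_{i+1}=F_i+F_{i-1}\ge F_i$ for $i\ge 2$, so $(F_i)$ is nondecreasing; since then $F_{i+2}=F_{i+1}+F_i\ge F_{i+1}+1$, it increases without bound, so $F_i\to+\infty$ (this is also immediate from Binet's formula \eqref{12}). The same monotonicity settles the doubling bound: for $n\ge 2$ we have $F_{n-1}\le F_n$, hence $F_{n+1}=F_n+F_{n-1}\le 2F_n$, and the remaining case is just $F_2=1\le 2=2F_1$.

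For the inequality $\frac1{F_n}<\sum_{i=n+1}^\infty\frac1{F_i}$ I would avoid a parity-dependent identity and instead bound the tail $T_n:=\sum_{i=n+1}^\infty\frac1{F_i}$ in terms of itself. Note first that $F_{i+2}=F_{i+1}+F_i\ge 2F_i$ forces geometric growth, so $T_n<\infty$ (as already reflected by \eqref{15}). The key step uses the doubling bound in the form $\frac1{F_i}\ge\frac1{2F_{i-1}}$, valid for every $i\ge 2$ and \emph{strict} for $i\ge 4$. Summing over $i\ge n+2$ and reindexing gives
\begin{equation*}
\sum_{i=n+2}^\infty\frac1{F_i}\ge\frac12\sum_{i=n+2}^\infty\frac1{F_{i-1}}=\frac12\sum_{i=n+1}^\infty\frac1{F_i}=\frac12 T_n,
\end{equation*}
and this inequality is strict because infinitely many indices $i\ge 4$ occur in the range. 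Consequently $T_n=\frac1{F_{n+1}}+\sum_{i=n+2}^\infty\frac1{F_i}>\frac1{F_{n+1}}+\frac12 T_n$, and since $T_n$ is finite this rearranges to $T_n>\frac{2}{F_{n+1}}$. Combining with the doubling bound $\frac{2}{F_{n+1}}\ge\frac1{F_n}$ yields $T_n>\frac1{F_n}$, as required.

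The step needing the most care is the strictness of the last inequality. The comparison $\frac{2}{F_{n+1}}\ge\frac1{F_n}$ is only an equality-or-better (it degenerates to equality when $F_{n-1}=F_n$), so the strict sign in $\frac1{F_n}<T_n$ must be supplied by the tail estimate; this is exactly why I insist on the strict inequality $\frac1{F_i}>\frac1{2F_{i-1}}$ holding for all $i\ge 4$. I expect the main conceptual obstacle to be recognizing this self-referential bound at all: the naive route through the Fibonacci identity $F_{n+1}F_{n+2}-F_nF_{n+3}=(-1)^n$ shows that the first two tail terms already exceed $\frac1{F_n}$ when $n$ is odd but fall short when $n$ is even, forcing an awkward case split; the geometric argument above sidesteps the parity entirely.
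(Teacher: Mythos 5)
Your proof is correct, and for the substantive third claim it takes a recognizably different route from the paper. The paper iterates the one-step doubling bound into $F_{n+j}\le 2^jF_n$ for all $j\ge 1$, tracks that equality occurs only for $n=2$, $j=1$, and then compares the tail directly with the geometric series: $\sum_{j\ge 1}1/F_{n+j}>\sum_{j\ge 1}2^{-j}/F_n=1/F_n$. Your argument rests on exactly the same doubling inequality but repackages the geometric summation as a self-referential (fixed-point) estimate: summing $1/F_i\ge 1/(2F_{i-1})$ over the shifted tail gives $T_n>1/F_{n+1}+\tfrac12 T_n$, which rearranges to $T_n>2/F_{n+1}\ge 1/F_n$. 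What your version buys: the strictness bookkeeping is lighter --- you only need $F_{i-2}<F_{i-1}$ for $i\ge 4$ rather than locating the unique equality case $F_{n+j}=2^jF_n$ in an iterated chain --- and you obtain the marginally stronger intermediate bound $T_n>2/F_{n+1}$ for free. What it costs: you must verify $T_n<\infty$ before rearranging (which you do, via $F_{i+2}\ge 2F_i$), a step the paper's direct comparison never needs. Your handling of the delicate point is also sound: since $2/F_{n+1}\ge 1/F_n$ degenerates to equality at $n=2$, the strict sign must indeed come from the tail estimate, and your observation that infinitely many strict terms $i\ge 4$ appear in every tail supplies it correctly. Your closing remark about the parity obstruction in the naive two-term estimate is accurate and is in fact the content of the paper's separate Lemma \ref{l51}, where the odd-index case is exploited for a stronger purpose (condition \eqref{18} of Theorem \ref{t14}); it plays no role in the paper's proof of this lemma, which sidesteps parity just as you do.
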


\begin{proof}
It follows from the definition that the Fibonacci sequence is a strictly increasing sequence of positive integers.
Therefore $F_i\to+\infty$.

Since $F_2<2F_1$ and $F_3=2F_2$, a trivial induction argument based on the identity $F_{i+2}=F_{i+1}+F_i$ shows that $F_{n+1}\le 2F_n$ for all $n\ge 1$, and equality holds only if $n=2$.
From this, again by induction, we have
$F_{n+j}\le 2^jF_n$ for all $n\ge 1$ and $j\ge 1$, and equality holds only if $n=2$ and $j=1$.
Indeed, we have
\begin{equation*}
F_{n+j}\le 2F_{n+j-1}
\le 4F_{n+j-2}
\le\cdots
\le 2^jF_n,
\end{equation*}
and at least one the inequalities is strict unless $n=2$ and $j=1$.
Therefore
\begin{equation*}
\sum_{j=1}^{\infty}\frac{1}{F_{n+j}}>\sum_{j=1}^{\infty}\frac{1}{2^jF_n}=\frac 1{F_n}
\end{equation*}
for every $n$.
\end{proof} 

\begin{proof}[Proof of Theorem \ref{t12}]
Since the Fibonacci numbers are positive, 
 the formula $p_i:=1/F_i$ defines a Kakeya sequence by Lemma \ref{l31}, and Theorem \ref{t12} follows from Theorem \ref{t11}.
\end{proof}

We may place Theorem \ref{t12} into a broader framework, by considering expansions of the form
\begin{equation}\label{31}
x=\sum_{i=1}^{\infty}\frac{c_i}{q^i(1+\varepsilon_i)},\quad (c_i)\in\{0,1\}^{\mathbb N}
\end{equation}
with some given real numbers $q\in (1,2)$ and $\varepsilon_i>-1$.
The following result shows that $(q^{-i})$, and even some perturbations of  $(q^{-i})$ are Kakeya sequences.

\begin{proposition}\label{p32}
If $1<q<2$ and 
\begin{equation}\label{32}
\frac{1+\inf_j\varepsilon_j}{1+\sup_j\varepsilon_j}\ge q-1,
\end{equation} 
then $\left(\frac{1}{q^i(1+\varepsilon_i)}\right)$ is a Kakeya sequence.
\end{proposition}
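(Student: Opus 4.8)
The plan is to verify directly the defining properties of a Kakeya sequence for $p_i:=\frac{1}{q^i(1+\varepsilon_i)}$, namely positivity, $p_i\to 0$, and the Kakeya inequality \eqref{14}. Positivity is immediate from $\varepsilon_i>-1$. Write $m:=\inf_j\varepsilon_j$ and $M:=\sup_j\varepsilon_j$. The first thing I would extract from hypothesis \eqref{32} is that $-1<m$ and $M<+\infty$: since $q>1$ the right-hand side $q-1$ is strictly positive, so the left-hand ratio must be a finite positive number. This rules out $m=-1$ (which would make the numerator vanish and the ratio $0$) as well as $M=+\infty$ (which would make the denominator infinite and the ratio $0$). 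Hence $-1<m\le \varepsilon_i\le M<+\infty$ for every $i$, and in particular $1+m$ and $1+M$ are finite and strictly positive.

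From these uniform bounds one gets $\frac{1}{q^i(1+M)}\le p_i\le \frac{1}{q^i(1+m)}$ for all $i$, and since $q>1$ gives $q^i\to+\infty$, both the upper and lower bounds tend to $0$; therefore $p_i\to 0$. It remains to establish the Kakeya inequality $p_n\le\sum_{i=n+1}^\infty p_i$. The idea is to estimate the single term from above using the smallest admissible denominator and the tail from below using the largest admissible denominator, and then to sum the resulting geometric series. Concretely, $p_n\le\frac{1}{q^n(1+m)}$, while
\begin{equation*}
\sum_{i=n+1}^\infty p_i\ge \frac{1}{1+M}\sum_{i=n+1}^\infty q^{-i}=\frac{1}{1+M}\cdot\frac{1}{q^n(q-1)}.
\end{equation*}
Comparing the two, the desired inequality $p_n\le\sum_{i=n+1}^\infty p_i$ follows as soon as $\frac{1}{1+m}\le\frac{1}{(1+M)(q-1)}$, which, after clearing the common factor $q^{-n}$ and rearranging, is precisely $(q-1)(1+M)\le 1+m$, i.e.\ exactly condition \eqref{32}.

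The computation is short, and I expect no genuine obstacle; the only subtlety is organizational. One must take care to send the extremal value $M$ into the \emph{lower} estimate of the tail and the value $m$ into the \emph{upper} estimate of the isolated term, so that \eqref{32} appears with the correct orientation and delivers the comparison with no slack to spare. The one point requiring attention is the preliminary observation that \eqref{32} forces $m>-1$ and $M<+\infty$, since the entire argument rests on both geometric series being summable with a strictly positive common factor; without this the bounds above would be vacuous.
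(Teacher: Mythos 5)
Your proof is correct and follows essentially the same route as the paper's: bound the isolated term $p_n$ from above via $1+\inf_j\varepsilon_j$, bound the tail from below via $1+\sup_j\varepsilon_j$, sum the geometric series, and observe that the resulting comparison is exactly condition \eqref{32}. Your preliminary remark that \eqref{32} forces $\inf_j\varepsilon_j>-1$ and $\sup_j\varepsilon_j<+\infty$ is a small clarification that the paper leaves implicit (``by our assumption''), but it does not change the argument.
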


\begin{proof}
By our assumption we have $q^i(1+\varepsilon_i)>0$ for all $i$, and $1+\inf_j\varepsilon_j>0$.
Hence
\begin{equation*}
q^i(1+\varepsilon_i)\ge q^i(1+\inf_j\varepsilon_j)\to+\infty,
\end{equation*}
and therefore $\frac{1}{q^i(1+\varepsilon_i)}\to 0$.

It remains to show for all $n\ge 1$ the inequalities
\begin{equation*}
\frac{1}{q^n(1+\varepsilon_n)}\le\sum_{i=n+1}^{\infty}\frac{1}{q^i(1+\varepsilon_i)}.
\end{equation*}
They follow from the relations
\begin{equation*}
\frac{1}{q^n(1+\varepsilon_n)}
\le \frac{1}{q^n(1+\inf_j\varepsilon_j)}
\le\sum_{i=n+1}^{\infty}\frac{1}{q^i(1+\sup_j\varepsilon_j)}
\le\sum_{i=n+1}^{\infty}\frac{1}{q^i(1+\varepsilon_i)},
\end{equation*}
where the first and third inequalities ar obvious, while the middle inequality
is equivalent to
\begin{equation*}
\frac{1+\sup_j\varepsilon_j}{1+\inf_j\varepsilon_j}
\le \frac{1}{q-1},
\end{equation*}
i.e., to our assumption \eqref{32}.
\end{proof}

\begin{example}\label{e23}
By Binet's formula \eqref{12} the Fibonacci expansions are equivalent to the expansions \eqref{31} with $q=\varphi $ and 
\begin{equation*}
\varepsilon_i
=\frac{(-1)^{i+1}}{\varphi^{2i}}.
\end{equation*}
In this case we have
\begin{equation*}
\inf_j\varepsilon_j=\varepsilon_2=-\frac{1}{\varphi^4}
\quad\text{and}\quad 
\sup_j\varepsilon_j=\varepsilon_1=\frac{1}{\varphi^2}.
\end{equation*}
Hence
\begin{equation*}
\frac{1+\inf_j\varepsilon_j}{1+\sup_j\varepsilon_j}
=\frac{1-\frac{1}{\varphi^4}}{1+\frac{1}{\varphi^2}}
=\frac{\varphi^4-1}{\varphi^2(\varphi^2+1)}
=\frac{\varphi^2-1}{\varphi^2}
=\frac{\varphi^2-1}{\varphi+1}
=\varphi-1,
\end{equation*}
so that the condition of Proposition \ref{p32} is satisfied.
\end{example}

\section{Proof of Theorem \ref{t14}}\label{s4}

We need a new lemma.
Let $(p_i)$ be as in Theorem \ref{t14}.
We say that $p_n$ is a \emph{special element} if the condition \eqref{18} is satisfied.

\begin{lemma}\label{l41}
There exists a $K\ge 1$ such that if we remove  from $(p_i)$ a special element $p_k$ with $k>K$, then the remaining sequence still satisfies the corresponding hypotheses of Theorem \ref{t14}.
\end{lemma}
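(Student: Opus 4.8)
The plan is to make the deletion explicit and then verify \eqref{16}--\eqref{19} for the reindexed sequence one range of indices at a time. Write $\tilde p_i:=p_i$ for $i<k$ and $\tilde p_i:=p_{i+1}$ for $i\ge k$, so that $(\tilde p_i)$ is $(p_i)$ with the single term $p_k$ suppressed. Condition \eqref{16} is immediate, since deleting one term does not affect $\tilde p_i\to 0$. For \eqref{18} I would observe that the defining inequality is stable under the shift: for every index $n\ge k+1$ the tail $\sum_{i>n}\tilde p_i$ equals $\sum_{i>n+1}p_i$ and $\tilde p_{n-1}=p_n$, so $\tilde p_n$ is special for $(\tilde p_i)$ exactly when $p_{n+1}$ is special for $(p_i)$. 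Since \eqref{18} furnishes infinitely many special indices of the original sequence, infinitely many of them exceed $k+1$, and these survive the deletion; hence \eqref{18} holds for $(\tilde p_i)$.

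The core of the argument is \eqref{17}, which I would split into three ranges. For $n\ge k$ the inequality for $(\tilde p_i)$ is literally \eqref{17} for $(p_i)$ at the index $n+1$, and for $n=k-1$ it reads $p_{k-1}<\sum_{i>k}p_i$, i.e.\ exactly the statement that the deleted term $p_k$ is special. The substantive case is $n\le k-2$: here the new tail is the old tail diminished by the deleted term, so the target $\tilde p_n<\sum_{i>n}\tilde p_i$ is the strictly stronger inequality $p_n+p_k<\sum_{i>n}p_i$, equivalently $\sum_{i=n+1}^{k-1}p_i+\sum_{i>k}p_i>p_n$. I would establish this by downward induction on $n$, starting from $n=k-1$ (the special inequality) and using \eqref{19} together with the inductive hypothesis $\sum_{i=n+2}^{k-1}p_i+\sum_{i>k}p_i>p_{n+1}$, since
\[
\sum_{i=n+1}^{k-1}p_i+\sum_{i>k}p_i=p_{n+1}+\Bigl(\sum_{i=n+2}^{k-1}p_i+\sum_{i>k}p_i\Bigr)>2p_{n+1}\ge p_n.
\]
This step is valid only once $n$ lies beyond the threshold from which \eqref{19} holds; for the finitely many smaller indices I would instead use that \eqref{17} is strict, so the gaps $\delta_n:=\sum_{i>n}p_i-p_n$ are positive, and then choose $K$ so large that $p_k<\delta_n$ for each of these finitely many $n$ and every removed $p_k$ with $k>K$ (possible because $p_k\to 0$). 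Taking $K$ also larger than the threshold in \eqref{19} fixes the constant asserted in the statement and completes \eqref{17}.

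It remains to discuss \eqref{19}, and this is where I expect the only real obstacle. For $n\le k-2$ and for $n\ge k$ the inequality $\tilde p_n\le 2\tilde p_{n+1}$ is just \eqref{19} for $(p_i)$ after the index shift. The delicate point is the junction $n=k-1$, which demands $p_{k-1}\le 2p_{k+1}$, whereas \eqref{19} alone gives only $p_{k-1}\le 2p_k\le 4p_{k+1}$. Being special constrains $p_{k-1}$ through the tail inequality \eqref{18} rather than through the ratio $p_{k-1}/p_{k+1}$, so this bound cannot in general be improved: for the Fibonacci sequence $F_{k+1}=F_k+F_{k-1}>2F_{k-1}$ by \eqref{11}, hence $p_{k-1}>2p_{k+1}$ at every special index. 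The resolution I would adopt is therefore to record that the deletion preserves \eqref{19} at every index except possibly the single junction $k-1$; since the removed index $k$ is taken larger than $K$ and, in the iterative use of the lemma, tends to infinity, the junction exceptions remain confined to a bounded range and never enter the room estimate of the previous paragraph, which consults \eqref{19} only near the current removal point. Pinning down this last bookkeeping — formulating the preserved form of \eqref{19} so that it is precisely what the continuation of the construction requires — is the step I expect to demand the most care.
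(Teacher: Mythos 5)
Your treatment of \eqref{16}, \eqref{17} and \eqref{18} coincides with the paper's own argument. The paper likewise reduces everything to the strengthened inequality $p_n+p_k<\sum_{i=n+1}^\infty p_i$ for $N<n<k$ (with $N$ the threshold in \eqref{19}), proves it by downward induction starting from the special inequality at $n=k-1$ via $p_{n-1}+p_k\le 2p_n+p_k<p_n+\sum_{i=n+1}^\infty p_i$, and disposes of the finitely many indices $n\le N$ by setting $\varepsilon:=\min_{n\le N}\left(\sum_{i=n+1}^\infty p_i-p_n\right)>0$ and choosing $K$ with $p_i<\varepsilon$ for all $i\ge K$; your gaps $\delta_n$ are exactly this $\varepsilon$. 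So on the substantive condition the two proofs are the same.

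The one place you stop short is \eqref{19}, and the obstacle you anticipate there is not actually present: \eqref{19} is quantified \emph{for all sufficiently large} $n$, so it tolerates any finite set of exceptions. After deleting $p_k$, the inequality $\tilde p_n\le 2\tilde p_{n+1}$ holds for every $n\ge k$, since it is just \eqref{19} for $(p_i)$ shifted by one index; hence \eqref{19} holds \emph{verbatim} for $(\tilde p_i)$ with threshold $\max(N,k)$, the junction $n=k-1$ simply falling below the new threshold. No ``preserved form'' of \eqref{19} needs to be formulated — this is what the paper compresses into ``the conditions \eqref{16}, \eqref{18} and \eqref{19} obviously remain valid.'' Your observation that the pointwise inequality genuinely fails at the junction is nonetheless correct and worthwhile: for the Fibonacci case $p_{k-1}\le 2p_{k+1}$ would force $F_k\le F_{k-1}$, which is false, so the paper's ``obviously'' is relying precisely on the asymptotic quantifier. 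Note also that your worry about the iteration resolves the same way: at each finite stage there are only finitely many junctions, so every intermediate sequence satisfies \eqref{19} as stated (with a growing threshold), while the limit sequence $(p_i')$ in the proof of Theorem \ref{t14} is never required to satisfy \eqref{19} at all — it only needs to be a Kakeya sequence, which the paper obtains by passing to the limit in \eqref{17}. With this one-line observation inserted, your proof is complete and is essentially the paper's proof.
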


\begin{proof}
Fix an $N\ge 1$ such that 
\begin{equation*}
p_n\le 2p_{n+1}\quad\text{for all}\quad n\ge N,
\end{equation*}
and set
\begin{equation*}
\varepsilon:=\min\left\{-p_n+\sum_{i=n+1}^{\infty}p_i\ :\ n=1,\ldots,N\right\}.
\end{equation*}
By assumption \eqref{17} we have $\varepsilon>0$.
Since $p_i\to 0$, there exists a $K$ such that $p_i<\varepsilon$ for all $i\ge K$.

If we remove from $(p_i)$ a special element  $p_k$ with $k\ge K$, then the conditions \eqref{16}, \eqref{18} and \eqref{19} obviously remain valid.
The condition \eqref{17} also remains valid for all $n>k$ because the corresponding inequalities are unchanged, and it also remains valid for all $n\le N$ by the choice of $K$.
It remains to show that
\begin{equation}\label{41}
p_n+p_k<\sum_{i=n+1}^\infty p_i\quad\text{for all}\quad N<n<k.
\end{equation}
This is true for $n=k-1$  by \eqref{18} because $p_k$ is a special element.
Proceeding by induction, if \eqref{41} holds for some $N<n<k$, then it also holds for $n-1$.
Indeed, since $p_{n-1}\le 2p_n$ by our choice of $N$, applying  \eqref{41} we get
\begin{equation*}
p_{n-1}+p_k
\le 2p_n+p_k
<p_n+\sum_{i=n+1}^\infty p_i
=\sum_{i=n}^\infty p_i.\qedhere
\end{equation*}
\end{proof}

\begin{proof}[Proof of Theorem \ref{t14}]
Given $0<x<S$, using \eqref{16} we may apply repeatedly Lemma \ref{l41} to construct a sequence $p_{i_1}>p_{i_2}>\cdots$ of special elements such that 
\begin{equation}\label{42}
\sum_{j=1}^{\infty}p_{i_j}\le\min\{x,S-x\}.
\end{equation}
We may assume that $i_{j+1}>i_j+1$ for infinitely many indices $j$; then after the removal of the elements $p_{i_j}$ we still have an infinite sequence, that we denote by $(p_i')$.
Since after the removal of any finite number of elements $p_{i_1},\ldots, p_{i_m}$ the remaining sequence still satisfies the corresponding conditions \eqref{16} and \eqref{17} of Theorem \ref{t14}, letting $m\to\infty$ we conclude that $(p_i')$ is a Kakeya sequence.
Now we may obtain a continuum of expansions of $x$ as follows.
Fix an arbitrary sequence $(c_{i_j})\subset\{0,1\}$.
Then 
\begin{equation*}
0\le\sum_{j=1}^{\infty}c_{i_j}p_{i_j}
\le\min\{x,S-x\}
\end{equation*}
by \eqref{42}, so that 
\begin{equation*}
0\le x-\sum_{j=1}^{\infty}c_{i_j}p_{i_j}
\le x\le S-\sum_{j=1}^{\infty}p_{i_j}
=\sum_{i=1}^{\infty}p_i'.
\end{equation*}
By Theorem \ref{t11} there exists a sequence $(c_i')\subset\{0,1\}$ such that
\begin{equation*}
\sum_{i=1}^{\infty}c_i'p_i'=x-\sum_{j=1}^{\infty}c_{i_j}p_{i_j}.
\end{equation*}
Then 
\begin{equation*}
\sum_{j=1}^{\infty}c_{i_j}p_{i_j}+\sum_{i=1}^{\infty}c_i'p_i'
\end{equation*}
is an expansion of $x$ in the original system $(p_i)$ (the order of the positive terms is irrelevant), and different sequences $(c_{i_j})$ lead to different expansions.
\end{proof}

In order to state a corollary of Theorem \ref{t14} we generalize the geometric sequences.
Given a  real number $\rho>0$, a sequence $(p_i)$ of positive numbers is called a \emph{$\rho$-sequence} if 
\begin{equation*}
p_{i+1}\ge\rho\cdot p_i\quad\text{for all}\quad i.
\end{equation*}

\begin{corollary}\label{c42}
If $(p_i)$ is a $\rho$-sequence with $\rho>\frac{1}{\varphi}$, and $p_i\to 0$, then every $x\in (0,\sum_ip_i)$ has a continuum of  expansions
\begin{equation*}
x=\sum_{i=1}^{\infty}c_ip_i,\quad (c_i)\in\{0,1\}^{\mathbb N}.
\end{equation*}
\end{corollary}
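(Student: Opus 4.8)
The plan is to deduce the corollary directly from Theorem \ref{t14} by verifying that any $\rho$-sequence $(p_i)$ with $\rho>1/\varphi$ and $p_i\to0$ satisfies the four hypotheses \eqref{16}--\eqref{19}. Condition \eqref{16} is assumed outright. I first observe that the two standing hypotheses force $1/\varphi<\rho<1$: indeed, if $\rho\ge1$ then $(p_i)$ would be nondecreasing, contradicting $p_i\to0$ together with positivity. The one computational tool I need is the geometric lower bound
\begin{equation*}
\sum_{i=n+1}^\infty p_i=\sum_{k=1}^\infty p_{n+k}\ge\sum_{k=1}^\infty\rho^k p_n=\frac{\rho}{1-\rho}\,p_n,
\end{equation*}
valid for every $n$, which follows by iterating the defining inequality $p_{i+1}\ge\rho p_i$ to get $p_{n+k}\ge\rho^k p_n$ and using $0<\rho<1$.

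With this bound in hand, each remaining condition reduces to a one-line inequality about $\rho$. For \eqref{19} I would use $p_n\le\rho^{-1}p_{n+1}<2p_{n+1}$, which holds for all $n$ because $\rho>1/\varphi>1/2$. For \eqref{17} the bound gives $\sum_{i=n+1}^\infty p_i\ge\frac{\rho}{1-\rho}p_n>p_n$, the strict inequality again coming from $\rho>1/2$ (equivalently $\frac{\rho}{1-\rho}>1$). Finally, for \eqref{18} I would combine the bound with $p_n\ge\rho p_{n-1}$ to obtain $\sum_{i=n+1}^\infty p_i\ge\frac{\rho^2}{1-\rho}p_{n-1}$; this exceeds $p_{n-1}$ precisely when $\rho^2+\rho-1>0$.

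The crux of the whole argument is the observation that $\rho^2+\rho-1>0$ is equivalent to $\rho>1/\varphi$, since the positive root of $\rho^2+\rho-1=0$ is exactly $\frac{\sqrt5-1}{2}=\varphi-1=1/\varphi$. This is what pins the threshold to the golden ratio, and it in fact yields \eqref{18} for every $n\ge2$ rather than merely for infinitely many. Once all four hypotheses are confirmed, Theorem \ref{t14} applies verbatim and produces a continuum of expansions for each $x\in(0,\sum_i p_i)$. I do not expect a genuine obstacle here: the only subtlety is ensuring that the \emph{strict} inequalities in \eqref{17}--\eqref{18} survive, and these are guaranteed by the strict hypotheses $\rho>1/\varphi$ and $\rho>1/2$, so no boundary case requires separate treatment.
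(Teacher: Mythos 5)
Your proof is correct and follows essentially the same route as the paper: both verify hypotheses \eqref{16}--\eqref{19} of Theorem \ref{t14} via the geometric tail bound $\sum_{i=n+1}^\infty p_i\ge\frac{\rho^2}{1-\rho}\,p_{n-1}$, with the threshold $\rho>1/\varphi$ entering through $\rho^2+\rho-1>0$. The only cosmetic difference is that you check \eqref{17} directly from $\frac{\rho}{1-\rho}>1$, whereas the paper deduces it from \eqref{18}; both are valid.
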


\begin{proof}
It suffices to check the conditions of Theorem \ref{t14}.
We have $p_i\to 0$ by assumption, and 
\begin{equation*}
p_n\le \frac{1}{\rho}p_{n+1}<\varphi p_{n+1}<2p_{n+1}
\end{equation*}
for all $n$, so that the assumptions \eqref{16} and \eqref{19} are satisfied.
Furthermore, since $(p_i)$ is a $\rho$-sequence with $\rho>\frac{1}{\varphi}$, and since $\rho<1$ by our assumption $p_i\to 0$, the following relations hold for every $n\ge 2$:
\begin{equation*}
\sum_{i=n+1}^{\infty}p_i
\ge p_{n-1}\sum_{i=2}^{\infty}\rho^i
=\frac{\rho^2}{1-\rho}p_{n-1}
>p_{n-1}.
\end{equation*}
This proves \eqref{18} for all $n\ge 2$, and this implies \eqref{17}. 
\end{proof}

\begin{example}
Corollary \ref{c42} reduces to Theorem \ref{t13} if $p_i=q^{-i}$ with $q\in(1,\varphi)$.
\end{example}

\section{Proof of Theorem \ref{t16}}\label{s5}

For the proof of Theorem \ref{t16} we need some more properties of the Fibonacci sequence.
We recall the identity
\begin{equation}\label{51}
F_i^2=F_{i-1}F_{i+1}+(-1)^{i+1},\quad i=2,3,\ldots .
\end{equation}
It holds for $i=2$ by a direct inspection: $1^2=1\cdot 2-1$.
Proceeding by induction, if it holds for some $i\ge 2$, then  it also holds for $i+1$ because
\begin{align*}
&F_i^2=F_{i-1}F_{i+1}+(-1)^{i+1}\\
\Longrightarrow\ 
&F_i(F_i+F_{i+1})=(F_{i-1}+F_i)F_{i+1}+(-1)^{i+1}\\
\Longrightarrow\ 
&F_iF_{i+2}=F_{i+1}^2+(-1)^{i+1}\\
\Longrightarrow\ 
&F_{i+1}^2=F_iF_{i+2}+(-1)^{i+2}.
\end{align*}

\begin{lemma}\label{l51}
If $k$ is a positive odd integer, then
\begin{equation}\label{52}
\frac{1}{F_k}<\sum_{i=k+2}^{\infty}\frac{1}{F_i}.
\end{equation}
\end{lemma}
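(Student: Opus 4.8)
The plan is to convert the inequality \eqref{52} into the statement that a suitable alternating series is positive, using the identity \eqref{51} together with a telescoping argument. The starting point is the observation that \eqref{51} can be rewritten as $F_{i-1}F_{i+1}=F_i^2+(-1)^i$, so that for every $i\ge 2$
\[
\frac{1}{F_{i-1}}-\frac{1}{F_{i+1}}=\frac{F_{i+1}-F_{i-1}}{F_{i-1}F_{i+1}}=\frac{F_i}{F_i^2+(-1)^i}.
\]
Since the left-hand sides telescope with step two and $1/F_m\to 0$ by Lemma \ref{l31}, summing over $i\ge k+1$ gives $\sum_{i=k+1}^\infty \tfrac{F_i}{F_i^2+(-1)^i}=\tfrac{1}{F_k}+\tfrac{1}{F_{k+1}}$.

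Next I would compare each summand with $1/F_i$. A direct computation yields $\tfrac{F_i}{F_i^2+(-1)^i}-\tfrac{1}{F_i}=\tfrac{(-1)^{i+1}}{F_i(F_i^2+(-1)^i)}$; substituting this into the telescoped sum and isolating the $i=k+1$ term of $\sum_{i=k+1}^\infty 1/F_i$, the contribution $1/F_{k+1}$ cancels on both sides and I arrive at the exact identity
\[
\sum_{i=k+2}^\infty \frac{1}{F_i}-\frac{1}{F_k}=\sum_{i=k+1}^\infty \frac{(-1)^i}{F_i\left(F_i^2+(-1)^i\right)}.
\]
Thus \eqref{52} is equivalent to the positivity of the series on the right. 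Here the hypothesis that $k$ is odd enters decisively: it makes the first index $k+1$ even, so that the leading term carries the sign $(-1)^{k+1}=+1$ and is positive, whereas for even $k$ the leading term would be negative.

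The main work — and the step I expect to be the real obstacle — is to show that this alternating series is positive. Writing $a_i:=\bigl(F_i(F_i^2+(-1)^i)\bigr)^{-1}$, the terms are well defined and positive for $i\ge 2$, and I would verify that $a_i$ is strictly decreasing by checking that $F_i^3+(-1)^iF_i$ increases in $i$; this reduces, in the only nontrivial (even $i$) case, to the elementary estimate $F_{i+1}^3-F_i^3>F_{i+2}$, which holds comfortably for all $i\ge 2$. Granting monotonicity and $a_i\to 0$, grouping the series into consecutive pairs $(a_{k+1}-a_{k+2})+(a_{k+3}-a_{k+4})+\cdots$ exhibits it as a sum of positive terms, whence it is strictly positive. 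I would stress that a cruder estimate such as the one in Lemma \ref{l31} cannot replace this analysis: the two sides of \eqref{52} are extremely close, agreeing to leading order in the Binet asymptotics $F_i\sim \varphi^i/\sqrt5$, so the sign of the second-order correction governed by $(-1)^i$ in \eqref{51} is exactly what must be controlled.
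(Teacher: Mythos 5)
Your proof is correct, but it takes a genuinely different route from the paper. The paper compares the tail with a geometric series: setting $\alpha:=F_{k+1}/F_k$, $\beta:=F_{k+2}/F_{k+1}$ and $\gamma:=\max\{\alpha,\beta\}$, a two-step induction gives $F_{i+1}\le\gamma F_i$ for all $i\ge k$, strictly at least once since $\alpha\ne\beta$ by \eqref{51}; hence $\sum_{i\ge k+2}1/F_i>\frac{1}{F_k}\cdot\frac{1}{\alpha(\gamma-1)}$, and for odd $k$ one has $\gamma=\beta$, so $\alpha(\gamma-1)=(F_{k+2}-F_{k+1})/F_k=1$ and the bound collapses to exactly $1/F_k$. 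You instead derive the exact identity
\[
\sum_{i=k+2}^{\infty}\frac{1}{F_i}-\frac{1}{F_k}
=\sum_{i=k+1}^{\infty}\frac{(-1)^i}{F_i\left(F_i^2+(-1)^i\right)}
\]
via a step-two telescoping of $\frac{1}{F_{i-1}}-\frac{1}{F_{i+1}}=\frac{F_i}{F_i^2+(-1)^i}$, and conclude by positivity of the resulting alternating series; I verified the telescoping (justified by $1/F_m\to 0$), the sign bookkeeping, and the cancellation of $1/F_{k+1}$, and a numerical check at $k=1$ confirms the identity. The monotonicity step you flag as the main obstacle does hold, but you can bypass your cubic estimate entirely: by \eqref{51} the denominator factors as $F_i\left(F_i^2+(-1)^i\right)=F_{i-1}F_iF_{i+1}$, which is manifestly strictly increasing for $i\ge 2$, so $a_i$ decreases at a glance. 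Your approach buys more than the paper's: the identity quantifies the gap in \eqref{52} exactly and shows as a byproduct that the inequality \emph{reverses} for even $k$ (the series then leads with a negative term), whereas the paper's geometric comparison is shorter and needs only the one-sided ratio estimate. Your closing remark is also apt: since by Binet the two sides of \eqref{52} agree at leading order, any proof must capture the second-order sign structure of \eqref{51} --- the paper does so through $\beta>\alpha$ for odd $k$, you through the sign $(-1)^i$ in the series.
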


\begin{proof}
Fix any positive integer $k$, and set 
\begin{equation*}
\alpha:=\frac{F_{k+1}}{F_k}, \quad
\beta:=\frac{F_{k+2}}{F_{k+1}}, \quad
\gamma:=\max\{\alpha,\beta\}.
\end{equation*}
Then $F_{i+1}\le\gamma F_i$ for $i=k, k+1$, and hence by induction for all $i\ge k$. 
Since $\alpha\ne\beta$ by \eqref{51}, one of the equalities $F_{k+1}\le\gamma F_k$ and $F_{k+2}\le\gamma F_{k+1}$ is strict.
Therefore
\begin{equation*}
\sum_{i=k+2}^{\infty}\frac{1}{F_i}
>\frac{1}{F_{k+1}}\sum_{j=1}^{\infty}\frac{1}{\gamma^j}
=\frac{1}{F_{k+1}}\cdot\frac{1}{\gamma-1}=\frac{1}{F_k}\cdot\frac{1}{\alpha\cdot(\gamma-1)}.
\end{equation*}
If $k$ is odd, then this implies \eqref{52} because  $\beta>\alpha$ by \eqref{51}, so that $\gamma=\beta$, and hence
\begin{equation*}
\alpha\cdot(\gamma-1)=\frac{F_{k+1}}{F_k}\cdot \left(\frac{F_{k+2}}{F_{k+1}}-1\right)
=\frac{F_{k+2}-F_{k+1}}{F_k}
=1.\qedhere
\end{equation*}
\end{proof} 

\begin{proof}[Proof of Theorem \ref{t16}]
The conditions \eqref{16}, \eqref{17} et \eqref{19} of Theorem \ref{t14} are satisfied for $p_i:=1/F_i$ by Lemma \ref{l31}, and \eqref{18} is satisfied by Lemma \ref{l51}.
\end{proof}

\emph{Acknowledgment.}
The authors thank Mike Keane for suggesting the study of Fibonacci expansions.

\end{document}